\numberwithin{equation}{section}
\theoremstyle{plain}
\newtheorem{theorem}{Theorem}[section]
\newtheorem{proposition}[theorem]{Proposition}
\theoremstyle{definition}
\newtheorem{definition}[theorem]{Definition}
\newtheorem{example}[theorem]{Example}
\newtheorem{remark}[theorem]{Remark}
\newcommand{\R}{\mathbb{R}}
\newcommand{\N}{\mathbb{N}}
\newcommand{\norm}[1]{\left\lVert #1 \right\rVert}
\newcommand{\ip}[2]{\left\langle #1,#2\right\rangle}
\title{A Beginner-Friendly Note on Maximal Monotone Operators}
\author{Hikmatullo Ismatov\\[0.3em]
\small King Abdullah University of Science and Technology (KAUST)}
\date{30/11/2025}
\begin{document}

\maketitle
\begin{abstract}
We give a self-contained and introductory account of some basic functional analytic tools needed to understand maximal monotone operators in Hilbert spaces. We review domains of (possibly unbounded) operators, closed sets and closed operators, and provide concrete examples of bounded and unbounded operators in both finite and infinite dimensions. We then explain in detail a fundamental result of Br\'ezis: if $A$ is a maximal monotone linear operator, then its domain is dense, $A$ is closed, and $(I+\lambda A)^{-1}$ is a non-expansive mapping for every $\lambda>0$. The Banach fixed point theorem (contraction mapping principle) is stated and used as a key ingredient in the analysis. The presentation is aimed at beginning graduate students and readers seeing these notions for the first time.
\end{abstract}

\noindent\textbf{Mathematics Subject Classification (2020):}
47H05, 47A06, 46N20.

\noindent\textbf{Keywords:}
maximal monotone operators, closed operators, contraction mapping principle, resolvent, Hilbert space.

\tableofcontents

\section{Introduction}

The aim of these notes is to give a first contact with \emph{maximal monotone operators} in a real Hilbert space $H$. This topic appears very early in the study of:
\begin{itemize}
  \item linear and nonlinear evolution equations (heat equation, parabolic PDEs),
  \item variational inequalities and gradient flows,
  \item Mean Field Games and other PDE models with monotone structure,
  \item the Hille--Yosida theorem and semigroup theory.
\end{itemize}

Even in the purely linear case, the basic properties of a maximal monotone operator $A$ are already quite rich:
\begin{itemize}
  \item the domain $D(A)$ is dense in $H$;
  \item $A$ is a closed (in general unbounded) operator;
  \item for every $\lambda>0$, the operator $I+\lambda A$ is bijective and its inverse is a contraction:
        \[
          \|(I+\lambda A)^{-1}\| \le 1.
        \]
\end{itemize}
These are the key ingredients behind the abstract theory of (linear) contraction semigroups, and they also serve as a model for the fully nonlinear theory of monotone operators in Banach spaces.

The structure of the notes is as follows:
\begin{itemize}
  \item In Section~\ref{sec:hilbert} we review basic Hilbert space notions: closed sets, orthogonal complements, and density.
  \item In Section~\ref{sec:operators} we discuss domains, graphs, boundedness and closedness of operators, with concrete examples (including unbounded operators).
  \item In Section~\ref{sec:monotone} we define monotone and maximal monotone linear operators.
  \item In Section~\ref{sec:contraction} we present the contraction mapping (Banach fixed point) theorem.
  \item In Section~\ref{sec:maximal-main} we prove the three basic properties of a maximal monotone operator $A$ following Br\'ezis.
  \item In Section~\ref{sec:examples-applications} we collect some simple examples and a PDE application, to indicate how these abstract objects arise in practice.
\end{itemize}
The level is deliberately elementary: we work only in Hilbert spaces and focus on the linear case, but the ideas already capture the core mechanisms used in more advanced settings.

\section{Hilbert spaces, closed sets and orthogonal complements}\label{sec:hilbert}

Let $(H,\ip{\cdot}{\cdot})$ be a real Hilbert space, with norm
$\norm{u} = \sqrt{\ip{u}{u}}$.

\subsection{Closed sets}

\begin{definition}[Closed set]
A subset $C\subset H$ is called \emph{closed} if whenever
$(x_n)_{n\in\N}$ is a sequence in $C$ with $x_n\to x$ in $H$, then
$x\in C$.
\end{definition}

Equivalently, $C$ is closed if $C$ contains all its limit points, or
if $C$ equals its closure $\overline{C}$.

\begin{example}
In $\R$ with the usual norm:
\begin{itemize}
  \item The interval $[0,1]$ is closed.
  \item The open interval $(0,1)$ is not closed, because
        $x_n = 1/n \in(0,1)$ converges to $0\notin(0,1)$.
\end{itemize}
\end{example}

\subsection{Orthogonal complements and density}

For a subspace $M\subset H$ we define its orthogonal complement by
\[
M^\perp = \{\,f\in H : \ip{f}{v}=0 \text{ for all } v\in M\,\}.
\]

\begin{proposition}[Density and orthogonal complement]
Let $M\subset H$ be a subspace.
Then the following are equivalent:
\begin{enumerate}
  \item $\overline{M} = H$, i.e.\ $M$ is dense in $H$;
  \item $M^\perp = \{0\}$.
\end{enumerate}
\end{proposition}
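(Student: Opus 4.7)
The plan is to prove both implications separately, with the ($1 \Rightarrow 2$) direction being essentially a continuity argument and ($2 \Rightarrow 1$) requiring a bit more Hilbert space structure (the projection theorem).

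For ($1 \Rightarrow 2$), I would take any $f \in M^\perp$ and show $f = 0$. By density of $M$ in $H$, pick a sequence $(v_n) \subset M$ with $v_n \to f$. Since $f \in M^\perp$, we have $\ip{f}{v_n} = 0$ for every $n$. The inner product is continuous in each argument (this follows from Cauchy--Schwarz), so passing to the limit gives $\ip{f}{f} = 0$, i.e.\ $\norm{f} = 0$, hence $f = 0$. This is the easy direction and requires no more than continuity of $\ip{\cdot}{\cdot}$.

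For ($2 \Rightarrow 1$), I would argue by contradiction. Suppose $\overline{M} \neq H$; then there exists $x \in H \setminus \overline{M}$. The set $\overline{M}$ is a closed subspace of $H$, so the Hilbert space projection theorem applies: there is a (unique) closest point $p \in \overline{M}$ to $x$, and the residual $z := x - p$ lies in $\overline{M}^\perp$. Since $x \notin \overline{M}$, we have $z \neq 0$. Finally, I would verify that $\overline{M}^\perp = M^\perp$: the inclusion $\overline{M}^\perp \subset M^\perp$ is immediate from $M \subset \overline{M}$, and for the reverse, any $f \in M^\perp$ satisfies $\ip{f}{v} = 0$ for all $v \in M$, and by continuity of the inner product this extends to all $v \in \overline{M}$. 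Thus $z \in M^\perp \setminus \{0\}$, contradicting (2).

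The main obstacle, or rather the non-trivial input, is the projection onto a closed convex (here: closed linear) subspace of a Hilbert space. Everything else is essentially bookkeeping with continuity of the inner product. If the projection theorem has not been proved earlier in the notes, I would either cite it as a standard Hilbert space result or, alternatively, phrase the argument using the orthogonal decomposition $H = \overline{M} \oplus \overline{M}^\perp$, which is equivalent. The rest is then a straightforward chain: $\overline{M}^\perp = M^\perp = \{0\}$ forces $H = \overline{M}$.
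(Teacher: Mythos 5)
Your proof is correct and follows essentially the same route as the one in the notes: continuity of the inner product for $(1)\Rightarrow(2)$ (you take the limit directly at $f$ rather than at an arbitrary $w$, a minor streamlining), and for $(2)\Rightarrow(1)$ a contradiction via the orthogonal decomposition $H=\overline{M}\oplus\overline{M}^\perp$ together with the inclusion $\overline{M}^\perp\subset M^\perp$. The projection theorem you invoke is exactly the ingredient underlying that decomposition, so the two arguments coincide.
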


\begin{proof}
$(1)\Rightarrow(2)$:
Let $f\in H$ be such that $\ip{f}{v}=0$ for all $v\in M$.
Since $M$ is dense, for every $w\in H$ there exists a sequence
$v_n\in M$ such that $v_n\to w$.
By continuity of the inner product,
\[
\ip{f}{w} = \lim_{n\to\infty} \ip{f}{v_n} = 0.
\]
Thus $f$ is orthogonal to every $w\in H$, in particular
$\ip{f}{f}=0$, so $\norm{f}^2=0$ and $f=0$.

$(2)\Rightarrow(1)$:
Suppose $\overline{M}\neq H$.
Then there exists $x\in H\setminus\overline{M}$.
Since $\overline{M}$ is a closed subspace, we have an orthogonal
decomposition $H = \overline{M} \oplus \overline{M}^\perp$, so we can
write $x = y+z$ with $y\in\overline{M}$, $z\in\overline{M}^\perp$.
Because $x\notin \overline{M}$, we must have $z\neq 0$.
But $M\subset\overline{M}$ implies $\overline{M}^\perp \subset M^\perp$,
so $z\in M^\perp$ and $z\neq 0$, contradicting $M^\perp = \{0\}$.
Therefore $\overline{M}=H$.
\end{proof}

In particular, to prove that a domain $D(A)$ is dense in $H$ it is
enough to show that the only $f\in H$ orthogonal to all $v\in D(A)$
is $f=0$.

\section{Operators, domains, boundedness and closedness}\label{sec:operators}

\subsection{Domains and graphs}

Let $A$ be a (possibly nonlinear) operator from a subset of $H$ to $H$.

\begin{definition}[Domain and graph]
The \emph{domain} of $A$ is
\[
D(A) = \{\,u\in H : Au \text{ is defined}\,\}.
\]
The \emph{graph} of $A$ is
\[
\mathcal{G}(A) = \{(u,Au) : u\in D(A)\} \subset H\times H.
\]
\end{definition}

If $A$ is a bounded linear operator on $H$ in the usual functional
analysis sense, then by definition $D(A)=H$ and people often do not
write the domain explicitly.
For unbounded or nonlinear operators the domain is essential.

\subsection{Bounded and unbounded linear operators}

Let $X,Y$ be normed spaces and $T:X\to Y$ linear.

\begin{definition}[Bounded linear operator]
We say $T$ is \emph{bounded} if there exists $C>0$ such that
\[
\norm{Tx}_Y \le C \norm{x}_X \quad\text{for all }x\in X.
\]
Equivalently, $T$ is continuous.
\end{definition}

If no such $C$ exists, $T$ is called \emph{unbounded}.

\begin{example}[Finite dimension: all linear operators are bounded]
Let $X=Y=\R^n$ with any norm.
Every linear map $T:\R^n\to\R^n$ is bounded.
Indeed, the unit sphere is compact and $x\mapsto\norm{Tx}$ is
continuous, so $\sup_{\norm{x}=1} \norm{Tx} <\infty$.
\end{example}

Thus truly unbounded linear operators appear only in infinite
dimensional spaces.

\begin{example}[Bounded operators in infinite dimension]\label{ex:bounded-operators}
\quad
\begin{itemize}
  \item Identity on $\ell^2$: $I(x)=x$, $\norm{Ix}=\norm{x}$.
  \item Right shift on $\ell^2$:
        $S(x_1,x_2,\dots)=(0,x_1,x_2,\dots)$.
        Then $\norm{Sx}=\norm{x}$, so $\norm{S}=1$.
  \item Multiplication by a bounded function:
        let $H=L^2(0,1)$ and $g\in L^\infty(0,1)$, define
        $(Tf)(x) = g(x)f(x)$.
        Then
        \[
        \norm{Tf}_{L^2}^2
          = \int_0^1 |g(x)|^2 |f(x)|^2\,dx
          \le \norm{g}_{L^\infty}^2 \norm{f}_{L^2}^2.
        \]
        Thus $T$ is bounded with $\norm{T}\le\norm{g}_{L^\infty}$.
\end{itemize}
\end{example}

\begin{example}[Unbounded linear operators in infinite dimension]\label{ex:unbounded-operators}
\quad
\begin{itemize}
  \item \emph{Derivative on $L^2(0,1)$:}
        let $H=L^2(0,1)$ and define
        \[
        D: D(D)\subset H \to H, \quad Df = f',
        \]
        with $D(D)=H_0^1(0,1)$.
        The operator $D$ is linear and densely defined, but unbounded:
        there exist $f_n$ with $\norm{f_n}_{L^2}\to 0$ and
        $\norm{f_n'}_{L^2}\to \infty$.
  \item \emph{Diagonal operator on $\ell^2$:}
        let $H=\ell^2$ and define
        \[
        (Tx)_n = n x_n,\quad
        D(T) = \bigl\{x\in\ell^2 : \sum_{n\ge1} n^2 |x_n|^2 < \infty\bigr\}.
        \]
        Then $T$ is linear and densely defined, but not bounded:
        for the unit vector $e_n$ with $1$ in position $n$ we have
        $\norm{e_n}=1$ yet $\norm{Te_n}=n\to\infty$.
\end{itemize}
\end{example}

\subsection{Closed graph and closed operators}

\begin{definition}[Closed graph]
Let $A:D(A)\subset H\to H$.
We say that $A$ has a \emph{closed graph} if $\mathcal{G}(A)$ is a
closed subset of $H\times H$, i.e.\ whenever
$u_n\in D(A)$, $u_n\to u$ in $H$, and $Au_n\to f$ in $H$, then
$(u,f)\in\mathcal{G}(A)$, or equivalently
$u\in D(A)$ and $Au=f$.
\end{definition}

\begin{definition}[Closed operator]
A linear operator $A:D(A)\subset H\to H$ is called \emph{closed} if
its graph $\mathcal{G}(A)$ is closed in $H\times H$.
\end{definition}

\begin{example}[Closed unbounded operator]
The derivative $D:H_0^1(0,1)\to L^2(0,1)$ defined above is closed:
if $u_n\in H_0^1(0,1)$, $u_n\to u$ in $L^2$ and $u_n'\to f$ in $L^2$,
then $u\in H_0^1(0,1)$ and $u'=f$ in the distributional sense.
Hence $(u,f)$ lies in the graph of $D$.
\end{example}

\begin{example}[An operator which is not closed]
Let $B:C^1([0,1])\subset L^2(0,1)\to L^2(0,1)$ be the classical
derivative $Bu=u'$.
There exists $u\in H^1(0,1)\setminus C^1([0,1])$ and a sequence
$u_n\in C^\infty([0,1])$ such that
$u_n\to u$ and $u_n'\to u'$ in $L^2$.
Thus $(u_n,Bu_n)\to(u,u')$ in $L^2\times L^2$, but
$u\notin D(B)$, so $(u,u')$ is not in the graph.
Hence the graph of $B$ is not closed and $B$ is not a closed operator.
\end{example}

\section{Monotone and maximal monotone linear operators}\label{sec:monotone}

\subsection{Monotone operators}

\begin{definition}[Monotone linear operator]
A (possibly unbounded) linear operator $A:D(A)\subset H\to H$ is
called \emph{monotone} if
\[
\ip{Au - Av}{u-v} \ge 0 \quad\text{for all }u,v\in D(A).
\]
If $A$ is linear, this is equivalent to
\[
\ip{Au}{u} \ge 0 \quad\text{for all }u\in D(A).
\]
\end{definition}

\begin{example}[Diagonal operator on $\ell^2$ is monotone]
Let $T$ be as in Example~\ref{ex:unbounded-operators}.
For $x\in D(T)$ we have
\[
\ip{Tx}{x}
  = \sum_{n=1}^\infty (Tx)_n \overline{x_n}
  = \sum_{n=1}^\infty n |x_n|^2 \ge 0.
\]
Thus $T$ is monotone.
\end{example}

\subsection{Maximal monotone operators}

Intuitively, a monotone operator is maximal if its graph cannot be
strictly enlarged while preserving monotonicity.

In this chapter Br\'ezis uses the following equivalent definition
for \emph{linear} operators.

\begin{definition}[Maximal monotone linear operator]
A monotone linear operator $A:D(A)\subset H\to H$ is called
\emph{maximal monotone} if, in addition,
\[
R(I+A) = H,
\]
i.e.\ for every $f\in H$ there exists $u\in D(A)$ such that
\[
u + Au = f.
\]
\end{definition}

This condition says that the operator $I+A$ is surjective.
It turns out to be equivalent (in the linear case) to the usual
``maximality of the graph'' definition.

\section{The contraction mapping theorem}\label{sec:contraction}

We recall the Banach fixed point theorem, which plays an important
role in the proof that $R(I+\lambda A)=H$ for all $\lambda>0$.

\begin{definition}[Contraction]
Let $(X,d)$ be a metric space.
A map $T:X\to X$ is called a \emph{contraction} if there exists a
constant $k\in[0,1)$ such that
\[
d(Tx,Ty) \le k\,d(x,y) \quad\text{for all }x,y\in X.
\]
In a normed space we write this as
$\norm{T(x)-T(y)}\le k\norm{x-y}$.
\end{definition}

\begin{theorem}[Contraction mapping / Banach fixed point]\label{thm:banach}
Let $(X,d)$ be a complete metric space and let $T:X\to X$ be a
contraction.
Then:
\begin{enumerate}
  \item There exists a unique fixed point $x^\ast\in X$ such that
        $T(x^\ast)=x^\ast$.
  \item For any starting point $x_0\in X$, the iterative sequence
        defined by $x_{n+1} = T(x_n)$ converges to $x^\ast$.
\end{enumerate}
\end{theorem}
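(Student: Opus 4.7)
The plan is to prove both existence and uniqueness by constructing the fixed point explicitly as the limit of the Picard iterates $x_{n+1}=T(x_n)$, and then reading off uniqueness from the contraction inequality. Completeness of $X$ will enter exactly once, to produce that limit; continuity of $T$ will let me pass to the limit in the recurrence.

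First I would establish the geometric decay estimate
\[
d(x_{n+1},x_n)\le k\,d(x_n,x_{n-1})\le \dots \le k^n\,d(x_1,x_0)
\]
by iterating the contraction property. From here a triangle-inequality argument, plus summing a geometric series, gives for any $n\ge 0$ and any $m\ge 1$
\[
d(x_{n+m},x_n)\le \sum_{j=0}^{m-1} k^{n+j}\,d(x_1,x_0)\le \frac{k^n}{1-k}\,d(x_1,x_0).
\]
Because $k\in[0,1)$, the right-hand side tends to $0$ as $n\to\infty$ uniformly in $m$, so $(x_n)$ is Cauchy. Completeness of $(X,d)$ then produces a limit $x^\ast\in X$, which also establishes the convergence statement~(2) once $x^\ast$ is shown to be the fixed point.

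Next I would verify that this limit is indeed a fixed point. Any contraction is Lipschitz with constant $k$, hence continuous, so $T(x_n)\to T(x^\ast)$. But also $T(x_n)=x_{n+1}\to x^\ast$, and since limits in a metric space are unique, $T(x^\ast)=x^\ast$. For uniqueness, if $y^\ast$ were another fixed point, the contraction inequality applied at $x^\ast$ and $y^\ast$ would give
\[
d(x^\ast,y^\ast)=d(T(x^\ast),T(y^\ast))\le k\,d(x^\ast,y^\ast),
\]
so $(1-k)\,d(x^\ast,y^\ast)\le 0$, and since $k<1$ this forces $x^\ast=y^\ast$.

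The only step that requires real care is the Cauchy estimate: one must handle the tail of the geometric series cleanly and use $k<1$ (not merely $k\le 1$) there, as well as in the uniqueness step. Everything else is a routine combination of continuity, completeness, and the recursive definition of the Picard iterates; no result beyond what has been recalled is needed.
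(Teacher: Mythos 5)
Your proposal is correct and follows essentially the same route as the paper's own sketch: geometric decay of consecutive distances, the tail-of-geometric-series Cauchy estimate, completeness to produce the limit, continuity of $T$ to identify it as a fixed point, and the contraction inequality for uniqueness. No gaps; nothing further to add.
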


\begin{proof}[Sketch of proof]
Fix $x_0\in X$ and define $x_{n+1}=T(x_n)$.
Then
\[
d(x_{n+1},x_n) = d(Tx_n, Tx_{n-1}) \le k\, d(x_n,x_{n-1}).
\]
By induction,
$d(x_{n+1},x_n) \le k^n d(x_1,x_0)$.
For $m>n$ we estimate
\[
d(x_m,x_n)
 \le \sum_{j=n}^{m-1} d(x_{j+1},x_j)
 \le d(x_1,x_0) \sum_{j=n}^{m-1} k^j
 \le \frac{k^n}{1-k} d(x_1,x_0).
\]
As $n\to\infty$ the right-hand side goes to $0$, so $(x_n)$ is a
Cauchy sequence.
Since $X$ is complete, $x_n\to x^\ast$ for some $x^\ast\in X$.
By continuity of $T$ (it is Lipschitz) we have
$T(x^\ast) = \lim T(x_n) = \lim x_{n+1} = x^\ast$, so $x^\ast$ is a
fixed point.

For uniqueness, suppose also $y^\ast$ is a fixed point.
Then
\[
d(x^\ast,y^\ast)
 = d(Tx^\ast, Ty^\ast)
 \le k\,d(x^\ast,y^\ast).
\]
Since $k<1$, this implies $d(x^\ast,y^\ast)=0$, so $x^\ast=y^\ast$.
\end{proof}

\section{A key result on maximal monotone operators}\label{sec:maximal-main}

We now explain in detail the three properties proved in Br\'ezis'
Proposition 7.1 for a maximal monotone linear operator $A$.

\begin{proposition}\label{prop:brezis-7.1}
Let $A:D(A)\subset H\to H$ be a maximal monotone linear operator.
Then
\begin{enumerate}
  \item[\textnormal{(a)}] $D(A)$ is dense in $H$;
  \item[\textnormal{(b)}] $A$ is a closed operator;
  \item[\textnormal{(c)}] For every $\lambda>0$ the operator
        $I+\lambda A:D(A)\to H$ is bijective,
        its inverse $(I+\lambda A)^{-1}$ is bounded, and
        $\|(I+\lambda A)^{-1}\|\le 1$.
\end{enumerate}
\end{proposition}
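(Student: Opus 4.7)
The plan is to establish the $\lambda=1$ case of (c) first, and then to leverage it to derive (a), (b), and finally the general $\lambda>0$ case of (c). For the $\lambda=1$ case, take $u,v\in D(A)$; monotonicity and linearity give
\[
\ip{(u+Au)-(v+Av)}{u-v}=\norm{u-v}^2+\ip{A(u-v)}{u-v}\ge\norm{u-v}^2,
\]
so Cauchy--Schwarz yields $\norm{u-v}\le\norm{(u+Au)-(v+Av)}$. This shows $I+A$ is injective and that its set-theoretic inverse is nonexpansive; combined with surjectivity (the definition of maximal monotonicity), the map $J_{1}:=(I+A)^{-1}:H\to D(A)$ is well-defined, linear, and satisfies $\norm{J_{1}}\le 1$.

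For (a), by the criterion from Section~\ref{sec:hilbert} it suffices to prove $D(A)^\perp=\{0\}$. Fix $f\in D(A)^\perp$ and, using surjectivity of $I+A$, pick $u\in D(A)$ with $u+Au=f$. Since $u\in D(A)$, testing $f$ against $u$ gives
\[
0=\ip{f}{u}=\norm{u}^2+\ip{Au}{u},
\]
and monotonicity forces $u=0$, hence $f=u+Au=0$ by linearity of $A$. For (b), if $u_n\in D(A)$ with $u_n\to u$ and $Au_n\to g$, then $f_n:=u_n+Au_n\to u+g=:f$, and continuity of $J_{1}$ gives $u_n=J_{1}(f_n)\to J_{1}(f)$; comparing with $u_n\to u$ yields $u=J_{1}(f)\in D(A)$ and $u+Au=f$, whence $Au=g$.

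Finally, for arbitrary $\lambda>0$ in (c), injectivity and the bound $\norm{(I+\lambda A)^{-1}}\le 1$ are proved exactly as in the $\lambda=1$ case, since monotonicity gives $\ip{u+\lambda Au}{u}\ge\norm{u}^2$ for $u\in D(A)$. The main obstacle, and the most delicate step of the proposition, is surjectivity of $I+\lambda A$ when $\lambda\neq 1$. I would bootstrap from $\lambda_{0}=1$: given $\lambda>0$ and $f\in H$, substituting $Au=(f-u)/\lambda$ (valid on any prospective solution) into $u+\lambda_{0}Au=f-(\lambda-\lambda_{0})Au$ recasts $u+\lambda Au=f$ as the fixed point equation
\[
u=J_{\lambda_{0}}\!\left(\frac{\lambda_{0}}{\lambda}f+\frac{\lambda-\lambda_{0}}{\lambda}u\right).
\]
Because $J_{\lambda_{0}}$ is nonexpansive, the right-hand side is Lipschitz in $u$ with constant $|\lambda-\lambda_{0}|/\lambda$, which is strictly less than $1$ exactly when $\lambda>\lambda_{0}/2$. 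Theorem~\ref{thm:banach} then produces a unique solution, giving $R(I+\lambda A)=H$ for every $\lambda>\lambda_{0}/2$. Starting with $\lambda_{0}=1$ covers all $\lambda>1/2$; iterating with progressively smaller admissible $\lambda_{0}$ reaches every $\lambda>0$ in finitely many steps, which completes (c). The careful bookkeeping of which values of $\lambda_{0}$ are already admissible, and the verification that the contraction constant stays below $1$ at each step, is the one place where the argument requires real attention.
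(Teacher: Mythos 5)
Your proposal is correct and follows essentially the same route as the paper: the orthogonality argument via surjectivity of $I+A$ for density, the continuity of $(I+A)^{-1}$ for closedness, and the same fixed-point reformulation with contraction constant $|1-\lambda_0/\lambda|$ and dyadic iteration for general $\lambda>0$. The only (cosmetic) difference is that you obtain injectivity and the bound $\norm{(I+A)^{-1}}\le 1$ simultaneously from the single inequality $\norm{u-v}\le\norm{(u+Au)-(v+Av)}$, where the paper runs two separate inner-product arguments.
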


\subsection{Part (a): the domain is dense}

\begin{proof}[Proof of (a)]
Let $f\in H$ be such that $\ip{f}{v}=0$ for all $v\in D(A)$.
We claim that $f=0$.

Because $A$ is maximal monotone, by definition
$R(I+A)=H$, so there exists $v_0\in D(A)$ such that
\[
v_0 + Av_0 = f.
\]
Taking the scalar product with $v_0$ gives
\[
0 = \ip{f}{v_0}
  = \ip{v_0+Av_0}{v_0}
  = \norm{v_0}^2 + \ip{Av_0}{v_0}.
\]
Since $A$ is monotone, $\ip{Av_0}{v_0}\ge 0$, hence
$0 \ge \norm{v_0}^2$, which implies $v_0=0$.
Then $f=v_0+Av_0 = 0$.

We have shown that if $f$ is orthogonal to all $v\in D(A)$ then
$f=0$, i.e.\ $D(A)^\perp = \{0\}$.
By the Hilbert space result proved earlier, this is equivalent to
$\overline{D(A)}=H$, so $D(A)$ is dense in $H$.
\end{proof}

\subsection{Part (b): $A$ is closed}

\begin{proof}[Proof of (b)]
We first show that for every $f\in H$ there exists a unique
$u\in D(A)$ such that
\[
u + Au = f.
\]

\emph{Existence} follows from maximal monotonicity:
$R(I+A)=H$.

\emph{Uniqueness}:
Suppose $u,\bar u\in D(A)$ both satisfy
$u + Au = f$ and $\bar u + A\bar u = f$.
Subtracting gives
\[
(u-\bar u) + A(u-\bar u) = 0.
\]
Taking the inner product with $u-\bar u$ yields
\[
\norm{u-\bar u}^2 + \ip{A(u-\bar u)}{u-\bar u} = 0.
\]
By monotonicity, $\ip{A(u-\bar u)}{u-\bar u}\ge 0$, so
$\norm{u-\bar u}^2\le 0$ and $u=\bar u$.

Thus the map $f\mapsto u$ defined by
\[
(I+A)u = f
\]
is well-defined and linear, and we denote it by
$(I+A)^{-1}:H\to H$.

Next we show $\norm{(I+A)^{-1}}\le 1$.
Let $f\in H$ and $u=(I+A)^{-1}f$, so $u+Au=f$.
Taking the inner product with $u$ we obtain
\[
\norm{u}^2 + \ip{Au}{u} = \ip{f}{u} \le \norm{f}\,\norm{u}.
\]
Since $\ip{Au}{u}\ge 0$ (monotonicity), we have
\[
\norm{u}^2 \le \norm{f}\,\norm{u}.
\]
If $u\neq 0$, dividing by $\norm{u}$ gives $\norm{u}\le\norm{f}$;
if $u=0$ the inequality is trivial.
So in all cases
\[
\norm{(I+A)^{-1}f} = \norm{u} \le \norm{f}.
\]
Thus $(I+A)^{-1}$ is a bounded linear operator with
$\norm{(I+A)^{-1}}\le 1$.

We now prove that $A$ is closed.
Let $(u_n)$ be a sequence in $D(A)$ such that
$u_n\to u$ in $H$ and $Au_n\to f$ in $H$.
We must prove that $u\in D(A)$ and $Au=f$.

Since both sequences converge, we have
\[
u_n + Au_n \longrightarrow u+f \quad\text{in }H.
\]
But for each $n$,
\[
u_n = (I+A)^{-1}(u_n + Au_n).
\]
Using the continuity of $(I+A)^{-1}$ we get
\[
u_n = (I+A)^{-1}(u_n + Au_n)
   \longrightarrow (I+A)^{-1}(u+f).
\]
On the other hand $u_n\to u$, so by uniqueness of limits
\[
u = (I+A)^{-1}(u+f).
\]
By definition of $(I+A)^{-1}$, this means that $u\in D(A)$ and
\[
(I+A)u = u+f.
\]
Thus $u+Au = u+f$ and therefore $Au=f$.
This shows that the graph of $A$ is closed and hence $A$ is a closed
operator.
\end{proof}

\subsection{Part (c): $(I+\lambda A)^{-1}$ exists for all $\lambda>0$}

\begin{proof}[Proof of (c)]
We have already shown (with $\lambda=1$) that $I+A$ is bijective and
$(I+A)^{-1}$ is bounded with norm at most $1$.
We now prove that the surjectivity of $I+\lambda_0 A$ for some
$\lambda_0>0$ implies the surjectivity of $I+\lambda A$ for all
$\lambda>\lambda_0/2$.

\medskip\noindent
\emph{Step 1: set up a fixed point problem.}
Assume $R(I+\lambda_0 A)=H$ for some $\lambda_0>0$.
Then as in part (b), we know that for every $g\in H$ there is a unique
$u\in D(A)$ solving
\[
u + \lambda_0 Au = g,
\]
and the map $g\mapsto u$ defines a bounded linear operator
$(I+\lambda_0 A)^{-1}$ with norm at most $1$.

Let $\lambda>0$ and $f\in H$ be given.
We want to solve
\begin{equation}\label{eq:lambda-equation}
u + \lambda A u = f.
\end{equation}
We rewrite this in terms of $I+\lambda_0 A$.
Equation~\eqref{eq:lambda-equation} can be rewritten as
\[
u + \lambda_0 Au
  = \frac{\lambda_0}{\lambda} f
    + \Bigl(1-\frac{\lambda_0}{\lambda}\Bigr) u.
\]
Applying $(I+\lambda_0 A)^{-1}$ gives
\begin{equation}\label{eq:fixed-point}
u
  = (I+\lambda_0 A)^{-1}
    \Bigl[
      \frac{\lambda_0}{\lambda} f
      + \Bigl(1-\frac{\lambda_0}{\lambda}\Bigr) u
    \Bigr].
\end{equation}
Define $T:H\to H$ by
\[
T(u)
  = (I+\lambda_0 A)^{-1}
    \Bigl[
      \frac{\lambda_0}{\lambda} f
      + \Bigl(1-\frac{\lambda_0}{\lambda}\Bigr) u
    \Bigr].
\]
Then~\eqref{eq:fixed-point} is simply the fixed point equation
\[
u = T(u).
\]

\medskip\noindent
\emph{Step 2: $T$ is a contraction if $\lambda>\lambda_0/2$.}
For $u,v\in H$ we compute
\begin{align*}
\norm{T(u) - T(v)}
  &= \norm{
      (I+\lambda_0 A)^{-1}
      \Bigl(1-\frac{\lambda_0}{\lambda}\Bigr)(u-v)
    } \\
  &\le \norm{(I+\lambda_0 A)^{-1}}
       \Bigl|1-\frac{\lambda_0}{\lambda}\Bigr|
       \norm{u-v} \\
  &\le \Bigl|1-\frac{\lambda_0}{\lambda}\Bigr|
       \norm{u-v},
\end{align*}
since $\norm{(I+\lambda_0 A)^{-1}}\le 1$.
Thus $T$ is a contraction provided
\[
\Bigl|1-\frac{\lambda_0}{\lambda}\Bigr| < 1.
\]
This inequality is equivalent to $\lambda>\lambda_0/2$.

\medskip\noindent
\emph{Step 3: apply the contraction mapping theorem.}
If $\lambda>\lambda_0/2$, then $T$ is a contraction on the complete
space $H$.
By Theorem~\ref{thm:banach}, there exists a unique fixed point
$u\in H$ such that $T(u)=u$, i.e.\ $u$ satisfies~\eqref{eq:fixed-point}.
Tracing back the steps shows that $u\in D(A)$ and solves
$u+\lambda Au=f$.

Thus for every $\lambda>\lambda_0/2$ the operator $I+\lambda A$ is
surjective.
The estimate $\norm{(I+\lambda A)^{-1}}\le 1$ follows from the same
inner product argument as in part (b), replacing $A$ by $\lambda A$.

\medskip\noindent
\emph{Step 4: all positive $\lambda$ are covered by iteration.}
For a maximal monotone operator $A$ we know $R(I+A)=H$, i.e.\
$\lambda_0=1$ is admissible.
Therefore $R(I+\lambda A)=H$ for all $\lambda>1/2$.
Now repeat the argument with $\lambda_0=1/2$ to obtain surjectivity
for all $\lambda>1/4$, then with $\lambda_0=1/4$ to get surjectivity
for all $\lambda>1/8$, and so on.
After $n$ steps we know that $R(I+\lambda A)=H$ for all
$\lambda>2^{-n}$.
Given any $\lambda>0$, we can choose $n$ such that $2^{-n}<\lambda$,
and hence $I+\lambda A$ is surjective.
This proves that $R(I+\lambda A)=H$ for every $\lambda>0$.
\end{proof}

\begin{remark}
Part (c) also shows that $(I+\lambda A)^{-1}$ is nonexpansive:
$\norm{(I+\lambda A)^{-1}} \le 1$ for all $\lambda>0$.
In the nonlinear (maximal monotone) case this family plays the role of
the resolvent and is fundamental in the theory of nonlinear semigroups.
\end{remark}

\section{Examples and applications}\label{sec:examples-applications}

In this final section we illustrate the abstract theory with a few simple examples and one basic PDE application.

\subsection{A finite-dimensional example}

Let $H=\R^n$ with the standard inner product and let $A$ be a symmetric positive semidefinite matrix, i.e.
\[
A=A^\top,\qquad x^\top A x \ge 0 \quad\text{for all }x\in\R^n.
\]
Then:
\begin{itemize}
  \item $A$ is a bounded linear operator with $D(A)=H$;
  \item $A$ is monotone, because
        \[
          \ip{Ax-Ay}{x-y}
          = (x-y)^\top A (x-y) \ge 0;
        \]
  \item since $D(A)=H$ and we are in finite dimension, $A$ is automatically closed.
\end{itemize}
The operator $I+\lambda A$ is invertible for all $\lambda>0$, and its inverse is bounded. In fact, its norm is $\le 1$ if and only if all eigenvalues $\mu$ of $A$ satisfy $\mu\ge 0$, which is exactly positive semidefiniteness.

This trivial example shows that in finite dimension, maximal monotone linear operators are essentially the same as symmetric positive semidefinite matrices (possibly after a suitable choice of inner product).

\subsection{The derivative with Dirichlet boundary conditions}

Let $\Omega=(0,1)$ and $H=L^2(0,1)$. Consider
\[
A u = -\frac{d^2 u}{dx^2}
\]
with domain
\[
D(A) = H^2(0,1)\cap H_0^1(0,1),
\]
i.e.\ functions which are twice weakly differentiable, vanish at the boundary, and whose second derivative lies in $L^2(0,1)$.

Then:
\begin{itemize}
  \item $A$ is densely defined;
  \item $A$ is symmetric and
        \[
          \ip{Au}{u}
          = \int_0^1 -u''(x)\,u(x)\,dx
          = \int_0^1 |u'(x)|^2\,dx \ge 0,
        \]
        so $A$ is monotone;
  \item $A$ is closed (this follows from elliptic regularity and standard Sobolev space theory);
  \item $A$ is in fact maximal monotone, and $-A$ generates the heat semigroup on $L^2(0,1)$.
\end{itemize}

In this setting, solving
\[
u + \lambda A u = f
\]
for a given $f\in L^2(0,1)$ and $\lambda>0$ is the same as solving the boundary value problem
\[
\begin{cases}
u(x) - \lambda u''(x) = f(x), & x\in(0,1),\\
u(0)=u(1)=0.
\end{cases}
\]
The abstract theory tells us that this problem has a unique weak solution $u\in D(A)$ and that the solution operator
\[
(I+\lambda A)^{-1}:L^2(0,1)\to L^2(0,1)
\]
is a contraction.

\subsection{A resolvent as an elliptic PDE}

More generally, let $\Omega\subset\R^d$ be a bounded domain and consider the operator
\[
Au = -\Delta u
\]
with homogeneous Dirichlet boundary conditions and domain
\[
D(A) = H^2(\Omega)\cap H_0^1(\Omega).
\]
Then $A$ is maximal monotone on $L^2(\Omega)$.

For $f\in L^2(\Omega)$ and $\lambda>0$, the abstract resolvent equation
\[
u + \lambda A u = f
\]
is equivalent to the elliptic problem
\[
\begin{cases}
u(x) - \lambda \Delta u(x) = f(x), & x\in\Omega,\\
u = 0 & \text{on }\partial\Omega.
\end{cases}
\]
The maximal monotone theory guarantees a unique weak solution in $H_0^1(\Omega)$ and provides a clean functional analytic framework to study its dependence on $f$ and $\lambda$.

\subsection{Outlook}

The linear theory presented here extends in several directions:
\begin{itemize}
  \item to nonlinear maximal monotone operators in Banach spaces, where the resolvent $(I+\lambda A)^{-1}$ and its properties play a central role;
  \item to abstract evolution equations of the form $u'(t)+Au(t)=0$, whose solutions can be constructed using the resolvent and a nonlinear semigroup generated by $-A$;
  \item to applications in PDE, including nonlinear diffusion, variational inequalities, Mean Field Games, and Hamilton--Jacobi equations with convex Hamiltonians.
\end{itemize}
Even in these more advanced settings, the basic mechanisms---monotonicity, closedness, density of the domain, and the contraction mapping theorem---are essentially the same as in the linear Hilbert space case treated in these notes.

\bigskip

\noindent\textbf{Summary.}
In this note we have:
\begin{itemize}
  \item recalled basic notions about closed sets, orthogonal
        complements and density in a Hilbert space;
  \item discussed domains, graphs, boundedness and closedness of
        operators, with concrete examples;
  \item defined monotone and maximal monotone linear operators;
  \item stated and proved the contraction mapping theorem; and
  \item given detailed proofs of the three key properties in
        Proposition~\ref{prop:brezis-7.1}, together with simple examples and a PDE application.
\end{itemize}
These tools form the linear functional analytic backbone of the
Hille--Yosida theorem and are also a model for the nonlinear monotone
operator theory used in PDE and Mean Field Games.


\begin{thebibliography}{9}

\bibitem{Brezis}
H.~Br\'ezis,
\emph{Op\'erateurs maximaux monotones et semi-groupes de contractions dans les espaces de Hilbert}.
North-Holland Mathematics Studies, Vol.~5, North-Holland, Amsterdam, 1973.

\end{thebibliography}
\end{document}